\documentclass[12pt,reqno]{amsart}
\usepackage{amsthm,amssymb,amsmath,microtype}
\usepackage{mathtools}
\usepackage[shortlabels]{enumitem}
\usepackage{shuffle}
\usepackage{bm}
\usepackage[utf8]{inputenc}
\usepackage[english]{babel}
\usepackage{listings}
\usepackage{geometry}
\geometry{verbose,tmargin=1in,bmargin=1in,lmargin=1in,rmargin=1in}
\usepackage{url}
\usepackage{graphicx}
\usepackage{setspace}
\usepackage{esint}
\usepackage[
pagebackref,
bookmarks=true,         
bookmarksnumbered=true, 
colorlinks=true,
pdfstartview=FitV,
linkcolor=blue, citecolor=blue, urlcolor=blue]{hyperref}

\def\RR{{\mathbb{R}}}

\def\EE{{\mathbb{E}}}

\def\caa{{\mathcal A}}

\def\cee{{\mathcal E}}

\newcommand{\HH}{\mathfrak H}

\def\lt{\left}
\def\rt{\right}
\theoremstyle{plain}
\newtheorem{theorem}{Theorem}

\newtheorem{lemma}[theorem]{Lemma}

\theoremstyle{definition}

\theoremstyle{remark}

\usepackage[]{enumitem}
\usepackage[
backrefs,
abbrev,
]{amsrefs}
\usepackage{prettyref}
\newrefformat{chap}{Chapter \ref{#1}}
\newrefformat{sub}{Subsection \ref{#1}}
\newrefformat{prop}{Proposition \ref{#1}}
\newrefformat{rem}{Remark \ref{#1}}
\newrefformat{cor}{Corollary \ref{#1}}
\newrefformat{def}{Definition \ref{#1}}
\newrefformat{ex}{Example \ref{#1}}
\newrefformat{lem}{Lemma \ref{#1}}
\newrefformat{eq}{Equation \eqref{#1}}
\newrefformat{con}{Condition \ref{#1}}
\setcounter{equation}{0}

\begin{document}

\title[] {A remark on a result of Xia Chen}

\author[K. L\^e]{ Khoa L\^e}

\address{Mathematical Sciences Research Institute, Berkeley, California, USA}
\email{khoale@ku.edu}
\date{September 25, 2015}
\begin{abstract}
	We consider the parabolic Anderson model which is driven by a Gaussian noise fractional in time and having certain scaling property in the spatial variables. Recently, Xia Chen has obtained exact Lyapunov exponent for all moments of positive integer orders. In this note, we explain how to extend Xia Chen's result for all moments of order $p$, where $p$ is any real number at least 2. 
\end{abstract}
\maketitle
	The parabolic Anderson model takes the form
	\begin{equation}\label{SHE}
		\partial_t u_ \lambda-\frac \Delta2 u_ \lambda=\sqrt \lambda u \dot{W}\,,
	\end{equation}
	where $t\ge0$, $x\in\RR^d$, $\lambda>0$ and $\dot{W}$ is a centered generalized Gaussian field. The covariance of $\dot{W}$ is given by
	\begin{equation}
		\EE[\dot{W}(t,x)\dot{W}(s,y)]=\gamma_0(t-s)\gamma(x-y)\,.
	\end{equation}
	The parameter $\lambda$ represent the intensity of the noise.
	We assume throughout the note that the initial datum of \eqref{SHE} is the constant 1. Equation \eqref{SHE} has a unique random field solution $\{u_ \lambda(t,x);t\ge0,x\in\RR^d\}$ satisfying
	\begin{equation}
		u_ \lambda(t,x)=1+\sqrt \lambda\int_0^t\int_{\RR^d}p_{t-s}(x-y)u_ \lambda(s,y)W(ds,dy)\,.
	\end{equation}
	It is known that $u_ \lambda(t,x)$ has finite moment of all positive orders. In addition, the moments of natural order can be expressed explicitly through a Feynman-Kac-type formula (c.f. \cite{hunualartsong}), namely
	\begin{equation}\label{eqn:FKn}
		\EE u^n_ \lambda(t,x)=\EE\exp\left\{\lambda\sum_{1\le j<k\le n}\int_0^t\int_0^t \gamma_0(s-r)\gamma(B_j(s)-B_k(r))dsdr \right\}\,,
	\end{equation}
	where $B_1,\dots,B_n$ are independent $d$-dimensional Brownian motions. We assume the following scaling condition
	\begin{enumerate}[(S)]
	 	\item\label{con:scaling} There exist $\alpha_0\in(0,1)$ and $\alpha\in(0,2)$ such that $\gamma_0(t)=|t|^{-\alpha_0}$ and $\gamma(c x)=c^{-\alpha}\gamma(x) $ for all $x\in\RR^d$ and all positive numbers $c$.
	\end{enumerate} 
	It is shown by Xia Chen in \cite{ChenXia15} under condition \ref{con:scaling} and other technical assumptions that for every $x\in\RR^d$ and every integer $n\ge1$,
	\begin{equation}\label{eqn:nmm}
		\lim_{t\to\infty}t^{-\frac{4- \alpha-2 \alpha_0}{2- \alpha}}\log \EE u^n_ \lambda(t,x)=n\left(\frac{n-1}2 \right)^{\frac2{2- \alpha}}\cee (\lambda)\,.
	\end{equation}
	where $\cee(\lambda)$ is the variation
	\begin{equation}
		\cee(\lambda)=\sup_{g\in\caa_d}\lt\{\lambda\int_0^1\int_0^1\int_{\RR^d\times\RR^d}\frac{\gamma(x-y)}{|s-r|^{-\alpha_0}}g^2(s,x)g^2(r,y)dxdydrds-\frac12\int_0^1\int_{\RR^d}|\nabla_x g(s,x)|^2dxds \rt\}\,.
	\end{equation}
	In the above expression, $\caa_d$ is the class of functions defined as
	\begin{equation*}
		\caa_d=\lt\{g\colon g(s,\cdot)\in W^{1,2}(\RR^d)\mbox{ and }\int_{\RR^d}g(s,x)dx=1\,\forall 0\le s\le 1 \rt\}\,.
	\end{equation*}
	Under scaling assumption \ref{con:scaling}, we always have
	\begin{equation}\label{eqn:Escaling}
		\cee(\lambda)=\lambda^{\frac2{2- \alpha}} \cee(1)\,.
	\end{equation}
	Our main purpose in this note is to show that \eqref{eqn:nmm} holds for any real numbers $n\ge2$. 
	\begin{theorem}\label{thm:1}
		For every $x\in\RR^d$ and every real number $p\ge2$, we have
		\begin{equation}\label{eqn:pmm}
			\lim_{t\to\infty}t^{-\frac{4- \alpha-2 \alpha_0}{2- \alpha}}\log \EE u^p_ \lambda(t,x)=p\left(\frac{p-1}2 \right)^{\frac2{2- \alpha}}\cee(\lambda)\,.
		\end{equation}
	\end{theorem}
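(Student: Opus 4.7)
The plan is to sandwich $\log\EE u_\lambda^p(t,x)$, reducing both directions to Chen's integer-moment result at $n=2$ via the scaling \eqref{eqn:Escaling}. Set $\kappa=2/(2-\alpha)$ and $\gamma=(4-\alpha-2\alpha_0)/(2-\alpha)$. The key algebraic identity, obtained from the scaling $\cee((p-1)\lambda)=(p-1)^\kappa\cee(\lambda)$, is
\[
p\lp\tfrac{p-1}{2}\rp^{\kappa}\cee(\lambda)=\tfrac{p}{2}\cdot 2\lp\tfrac{1}{2}\rp^{\kappa}\cee((p-1)\lambda),
\]
so the target constant is $p/2$ times the second-moment exponent at intensity $(p-1)\lambda$.

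For the upper bound I use Nelson's Gaussian hypercontractivity. The mild solution admits the Wiener chaos expansion $u_\lambda(t,x)=\sum_{n\ge0}\lambda^{n/2}J_n(t,x)$, where $J_n$ lies in the $n$-th homogeneous chaos and is independent of $\lambda$. The Ornstein--Uhlenbeck semigroup $T_s$ acts by $T_s u_\mu=u_{\mu e^{-2s}}$; hypercontractivity $\|T_s F\|_p\le\|F\|_2$ for $e^{2s}\ge p-1$, applied with $F=u_{(p-1)\lambda}(t,x)$ and $s=\tfrac{1}{2}\log(p-1)$, yields $\EE u_\lambda^p(t,x)\le(\EE u_{(p-1)\lambda}^2(t,x))^{p/2}$. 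Invoking \eqref{eqn:nmm} at $n=2$ with intensity $(p-1)\lambda$ together with \eqref{eqn:Escaling} then produces the sharp upper bound $\limsup_t t^{-\gamma}\log\EE u_\lambda^p(t,x)\le p(\tfrac{p-1}{2})^{\kappa}\cee(\lambda)$.

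The matching lower bound is the main obstacle. The elementary Jensen inequality $\EE u_\lambda^p\ge(\EE u_\lambda^{\lfloor p\rfloor})^{p/\lfloor p\rfloor}$ only gives $\liminf\ge p(\tfrac{\lfloor p\rfloor-1}{2})^{\kappa}\cee(\lambda)$, which is sharp solely at integer $p$. My plan is a Paley--Zygmund concentration argument based on Chen's two-sided integer-moment asymptotics. Writing $g(n)=n(\tfrac{n-1}{2})^\kappa$, the ratio $(\EE u_\lambda^n)^2/\EE u_\lambda^{2n}$ admits an exponential lower bound $\exp(-(g(2n)-2g(n))\cee(\lambda)t^\gamma+o(t^\gamma))$, giving a lower bound for $P(u_\lambda(t,x)\ge\exp(\tfrac{g(n)}{n}\cee(\lambda)t^\gamma))$. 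Multiplying by $u^p$ on this event yields a family of lower bounds indexed by integer $n$.

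The principal difficulty is that a naive optimization over integer $n$ does not recover the sharp constant, since the probability cost generally outweighs the size gain. The resolution requires either (i) a sharper concentration estimate extracted from Chen's variational lower-bound construction, exploiting the identity $p(\tfrac{p-1}{2})^\kappa\cee(\lambda)=p\,\cee(\tfrac{(p-1)\lambda}{2})$ to reinterpret the target rate as a single-particle variational quantity at an effective intensity, or (ii) a bootstrap combining the sharp upper bound above with the convexity of $p\mapsto t^{-\gamma}\log\EE u_\lambda^p(t,x)$ to promote the pointwise integer-point equalities into the full pointwise statement for real $p\ge2$.
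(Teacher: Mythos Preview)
Your upper bound is correct and is exactly the paper's argument: the hypercontractive inequality $\EE u_\lambda^p\le\bigl(\EE u_{(p-1)\lambda}^2\bigr)^{p/2}$ (equivalently $\|u_{\lambda/(q-1)}\|_{L^q}\le\|u_\lambda\|_{L^2}$) together with Chen's $n=2$ asymptotics and the scaling \eqref{eqn:Escaling} gives the sharp upper bound for all real $p\ge2$.

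The gap is entirely in the lower bound. Neither of your proposed routes reaches the sharp constant. The Paley--Zygmund scheme you outline, as you already note, loses a positive amount in the exponent because the tail-probability cost $g(2n)-2g(n)$ is strictly positive for every integer $n$; no choice of $n$ eliminates it. Your option~(ii) fails for a structural reason: convexity of $p\mapsto t^{-\gamma}\log\EE u_\lambda^p$ produces \emph{upper} bounds at non-integer $p$ from the integer values (the chord lies above the graph), not lower bounds. One can instead extrapolate from two integers on the right, e.g.\ $\liminf_t f_t(p)\ge (n+2-p)g(n+1)-(n+1-p)g(n+2)$ for $n<p<n+1$, but since the target $g(p)=p\bigl(\tfrac{p-1}{2}\bigr)^{\kappa}\cee(\lambda)$ is itself strictly convex, this backward linear extrapolation sits strictly below $g(p)$. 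So convexity plus the sharp upper bound plus integer equalities still does not recover the sharp lower bound.

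The paper's resolution of the lower bound is much simpler than anything you attempt: it observes that in Chen's original proof of the lower half of \eqref{eqn:nmm}, the integrality of $n$ is never used. That direction proceeds via a variational construction (test functions in $\caa_d$) rather than via the Feynman--Kac moment formula \eqref{eqn:FKn}, and goes through verbatim for every real $p>1$, yielding \eqref{eqn:lowerp1} directly. Your option~(i) gestures toward this, but the point is not a ``sharper concentration estimate'' to be extracted --- it is that Chen's lower-bound argument is already real-$p$ ready as written.
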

	We note that in the proof of the lower bound of \eqref{eqn:nmm} in \cite{ChenXia15}, we can replace $n$ by any real number $n>1$. Therefore, we always have
	\begin{equation}\label{eqn:lowerp1}
		\liminf_{t\to\infty}t^{-\frac{4- \alpha-2 \alpha_0}{2- \alpha}}\log \EE u^p_ \lambda(t,x)\ge p\left(\frac{p-1}2 \right)^{\frac2{2- \alpha}}\cee(\lambda)\,,\mbox{ for all } p>1\,.
	\end{equation}
	Base on this evident, we conjecture that \eqref{eqn:pmm} holds for every real number $p\ge1$ (note that the case $p=1$ is trivial).

 	On the other hand, Xia Chen's proof of the upper bound of \eqref{eqn:nmm} relies heavily on the Feynman-Kac moment representation \eqref{eqn:FKn}, which is meaningful only for moments of positive integer orders. 
 	To obtain the upper bound of \eqref{eqn:pmm} which is valid for any real number $p\ge2$, the key observation is the following inequality, proved in \cite{HLN15}.
 	\begin{lemma}
 		For every $q\ge p>1$, we have
 		\begin{equation}\label{eqn:qp}
 			\lt\|u_{\frac{p-1}{q-1}\lambda}(t,x)\rt\|_{L^q(\Omega)}\le\lt\|u_{\lambda}(t,x)\rt\|_{L^p(\Omega)}
 		\end{equation}
 		for every $t>0$ and  $x\in\RR^d$.
 	\end{lemma}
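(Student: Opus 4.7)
The inequality has the flavor of Nelson's hypercontractivity, and that is exactly how I would try to prove it. The plan is first to recall that the mild solution $u_\lambda(t,x)$ admits a Wiener chaos expansion
\[
    u_\lambda(t,x) \;=\; \sum_{n=0}^\infty \lambda^{n/2}\, I_n(f_{n,t,x}),
\]
where $I_n$ is the $n$-fold multiple stochastic integral against the Gaussian noise $W$ and the symmetric deterministic kernels $f_{n,t,x}$ do \emph{not} depend on the intensity $\lambda$. This is obtained by iterating the mild formulation of \eqref{SHE}: at the $n$-th step of Picard iteration one picks up exactly one factor $\sqrt{\lambda}$, so the dependence on $\lambda$ sits entirely in the prefactor $\lambda^{n/2}$.

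Next, let $T_\tau$ be the Ornstein--Uhlenbeck semigroup, i.e.\ the second quantization of $e^{-\tau}\Id$, acting on the Gaussian Hilbert space generated by $W$, so that
\[
    T_\tau I_n(f) \;=\; e^{-n\tau}\, I_n(f)
\]
for every $f$ and every $n\ge 0$. Choose $\tau\ge 0$ with $e^{-2\tau}=(p-1)/(q-1)$, which is feasible since $q\ge p>1$. Then, applying $T_\tau$ termwise to the chaos expansion,
\[
    T_\tau u_\lambda(t,x) \;=\; \sum_{n=0}^\infty \bigl(\lambda e^{-2\tau}\bigr)^{n/2}\, I_n(f_{n,t,x}) \;=\; u_{\frac{p-1}{q-1}\lambda}(t,x).
\]
Nelson's hypercontractivity theorem states that $\|T_\tau F\|_{L^q(\Omega)}\le\|F\|_{L^p(\Omega)}$ whenever $e^{-2\tau}\le (p-1)/(q-1)$. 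Applying it with $F=u_\lambda(t,x)$ yields exactly \eqref{eqn:qp}.

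The single point that requires care is the identification $T_\tau u_\lambda = u_{(p-1)\lambda/(q-1)}$ through the infinite chaos series: to commute $T_\tau$ with the sum one needs the chaos expansion to converge in $L^r(\Omega)$ for some $r\ge p$. This is not a serious obstacle because under the standing assumptions $u_\lambda(t,x)\in L^r(\Omega)$ for every $r\ge 1$, and its chaos expansion converges in every such $L^r$; $T_\tau$ is bounded on $L^2(\Omega)$ and preserves chaos order, so the termwise computation is justified. The real content of the argument is simply the observation that the rescaling $\lambda\mapsto (p-1)\lambda/(q-1)$ coincides with the action of the critical hypercontractive operator on $u_\lambda$, after which Nelson's inequality closes the estimate.
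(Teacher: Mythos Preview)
Your argument is correct and follows the same overall strategy as the paper: both identify the Ornstein--Uhlenbeck semigroup acting on $u_\lambda$ with $u_{e^{-2\tau}\lambda}$ and then invoke Nelson's hypercontractivity with $e^{-2\tau}=(p-1)/(q-1)$. The only difference lies in how that identification is carried out. You use the Wiener chaos expansion $u_\lambda=\sum_n \lambda^{n/2} I_n(f_{n,t,x})$ together with the spectral action $T_\tau I_n=e^{-n\tau}I_n$, which gives the identity purely algebraically. The paper instead uses Mehler's formula to write $P_\tau u_\lambda$ as a conditional expectation, checks that it satisfies the mild equation with intensity $\lambda e^{-2\tau}$, and concludes by uniqueness of solutions. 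Your route is slightly more direct and avoids introducing the auxiliary noise $W'$ and the auxiliary solution $u_{\tau,\lambda}$; the paper's route, on the other hand, does not require knowing the explicit $\lambda$-dependence of the chaos kernels and sidesteps any discussion of convergence of the chaos series in $L^p$. Either way the substance is the same.
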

 	Since this inequality is implicit in \cite{HLN15}, we reproduce the proof here. 
 	\begin{proof}
 		Let $\{P_\tau\}_{\tau\ge0}$ denote the Ornstein-Uhlenbeck semigroup in the Gaussian space associated with the noise $W$.  For a bounded measurable function $f$ on $\RR^{\HH}$,   we have the following Mehler's formula
		\begin{equation*}
			P_\tau f(W) =\EE'f(e^{-\tau}W+\sqrt{1-e^{-2\tau}}W')\,,
		\end{equation*}
		where  $W'$  an independent copy of $W$, and $\EE'$ denotes the expectation with respect to $W'$. 
 		For each $\tau\ge0$, let $u_{\tau,\lambda}$ be the solution to  equation \eqref{SHE} driven by the space-time Gaussian field $\sqrt{\lambda} (e^{-\tau}W+\sqrt{1-e^{-2\tau}}W')$, with initial condition $u_0=1$. That is,
		\begin{align*}
			u_{\tau,\lambda}(t,x)&=1
			\\&\quad+\sqrt\lambda\int_0^t\int_{\RR ^d} p_{t-s}(x-y)u_{\tau,\lambda}(s,y) (e^{-\tau}W(ds,dy)+\sqrt{1-e^{-2\tau}}W'(ds,dy))
			\,.
		\end{align*}
		From Mehler's formula we see that $P_\tau u_\lambda=\EE_{W'}[ u_{\tau,\lambda}]$ satisfies the equation
		\begin{equation*}
			P_\tau u_\lambda(t,x)=1+\sqrt\lambda e^{-\tau}\int_0^t\int_{\RR^d} p_{t-s}(x-y)P_\tau u_\lambda(s,y)W(ds,dy)\,.
		\end{equation*}
		In other words, $P_ \tau u_ \lambda$ is another solution of \eqref{SHE} with $\lambda$ being replaced by $\lambda e^{-2 \tau}$. By uniqueness, we conclude that $P_\tau u_\lambda=u_{e^{-2\tau}\lambda}$. On the other hand, it is well-known that the Ornstein-Uhlenbeck semigroup verifies the following hypercontractivity inequality
		\begin{equation}\label{ineq.hypercontractivity}
			\|P_{\tau}f\|_{q(\tau)}\le\|f\|_{p}
		\end{equation}
		for all $1<p<\infty$ and  $\tau\ge0$, where  $q(\tau)=1+e^{2 \tau}(p-1)$. Hence, applying the hypercontractivity property to our situation yields
		\begin{equation}\label{tmp:utl}
			\|u_{e^{-2\tau}\lambda}(t,x)\|_{q(\tau)}\le\|u_{\lambda}(t,x)\|_p
		\end{equation}
		for all $t>0,x\in\RR^d$ and $\tau\ge 0$. The result follows by choosing $\tau$ so that $q(\tau)=q$, that is $e^{-2 \tau}=\frac{p-1}{q-1}$. 
 	\end{proof}
 	\begin{proof}[Proof of Theorem \ref{thm:1}]
 	 	The lower bound has already observed in \eqref{eqn:lowerp1}. For the upper bound, let us choose $p=2$ in \eqref{eqn:qp} to obtain
	 	\begin{equation*}
	 		\lt\|u_{\frac{\lambda}{q-1}}(t,x)\rt\|_{L^q(\Omega)}\le\lt\|u_{\lambda}(t,x)\rt\|_{L^2(\Omega)}
	 	\end{equation*}
	 	for all $q\ge2$. We can apply \eqref{eqn:nmm} to get for all $q\ge2$ and $\lambda>0$
	 	\begin{equation*}
	 		\limsup_{t\to\infty}t^{-\frac{4- \alpha-2 \alpha_0}{2- \alpha}}\log \EE u^q_ {\frac{\lambda}{q-1}}(t,x)\le2^{-\frac2{2- \alpha}} \cee(\lambda)\,,
	 	\end{equation*}
	 	which, by the scaling relation \eqref{eqn:Escaling}, is equivalent to the upper bound for \eqref{eqn:pmm}. 
 	\end{proof}
 	We conclude with the following remark. From the expression \eqref{eqn:pmm}, the inequality \eqref{eqn:qp} is indeed equality in the limit $t\to\infty$. The situation is quite different if $W$ is a space-time white noise. In fact, in this case, it is proved in \cite{Ch15} that for every integer $n\ge1$,
 	\begin{equation}\label{eqn:whitenmm}
 		\lim_{t\to\infty}\frac1t\log\EE u^n_ \lambda (t,x)=\frac{n(n^2-1)\lambda^2}{24}\,.
 	\end{equation}
 	It follows that inequality \eqref{eqn:qp} is not sharp in the limit $t\to\infty$. Therefore, our method can not be directly applied in this case. It is, however believed in physics literature that \eqref{eqn:whitenmm} holds for any real number $n>0$.

\noindent
	\textbf{Acknowledgment:} The author was supported by the NSF  Grant no. 0932078 000, while  he visited the Mathematical Sciences Research Institute in Berkeley, California, during the Fall 2015 semester. The author thanks Jingyu Huang for helpful discussions and encouragement.
	
\bibliographystyle{named}
\bibliography{bib}
\end{document}